\newcommand\sphere[1]{\mathbb{S}^{#1}}
\newcommand\innerproduct[2]{\langle #1,#2\rangle}
\newcommand\norma[1]{\|#1\|}
\newtheorem{thm}{Theorem}[section]
\newtheorem{lem}[thm]{Lemma}
\theoremstyle{definition}
\newtheorem{definition}[thm]{Definition}
\title{A characterization of zonoids}
\author{Yossi Lonke}
\date{February 2020}
\begin{document}
\maketitle
\section{Introduction}
A function defined on the $n$-dimensional sphere $\sphere{n-1}$ will be called \emph{zonal} if its value at a point $x\in\sphere{n-1}$ depends only on the
angle between $x$ and a fixed axis. Thus if $u$ is a unit vector in the direction of the fixed axis, then a function is zonal with respect to $u$ if its value at $x$
depends only on the standard inner product $\innerproduct{x}{y}$. A natural way to generate zonal functions is as follows. Let $f\in C(\sphere{n-1})$, and fix a direction ${u\in\sphere{n-1}}$.
Let $(O(u),m)$ denote the subgroup of orthogonal transformations which keep the point $u$ fixed, equipped with the normalized Haar measure, $m$. Define
$$(S_uf)(x)=\int_{O(u)}f(Tx)\,dm(T)$$
Clearly, the function $S_uf$ is zonal. Appyling this procedure to support functions leads to the following definition.
\begin{definition}
Suppose $u\in\sphere{n-1}$ and $K$ is a centrally symmetric convex body, with support function $h_K$. The $u$-spin of $K$ is the convex body $K_u$ whose support function is $S_uh_K$.
\end{definition}
For example, computing the $e_n$-spin of the unit cube in $\mathbb{R}^n$, gives:
$$(S_{e_n}h_{B_n^{\infty}})(x)=\int_{O(e_n)}\norma{Tx}_1\,dm(T)=c_n\left(\sum_{i=1}^nx_i^2\right)^{1/2}+|x_n|,$$
($c_n$ is a positive number depending on $n$), which is the support function of a cylinder. This illustrates the choice of the word 'spin'.

Thus, with each body there is associated a system of rotation-bodies, one for each possible direction. Since $S_uf(u)=f(u)$ for each ${u\in\sphere{n-1}}$ and every function $f$, a body
is seen to be uniquely defined by its spins. The main result of this note is the following.

\begin{thm} A centrally symmetric convex body is a zonoid if and only if all its spins are zonoids.
\end{thm}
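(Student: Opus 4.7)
The plan is to use the classical cosine-transform characterization of zonoids: a centrally symmetric convex body $K$ is a zonoid if and only if $h_K=C\mu$ for some positive even Borel measure $\mu$ on $\sphere{n-1}$, where $(C\mu)(x)=\int_{\sphere{n-1}}|\innerproduct{x}{\xi}|\,d\mu(\xi)$. More generally, every centrally symmetric $K$ has a unique even generating distribution $\mu_K$ with $h_K=C\mu_K$, and the zonoid condition is equivalent to $\mu_K$ being a positive measure.

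The forward implication is nearly formal. A change of variables in the definition identifies the spin as a Minkowski average over the compact group $O(u)$,
$$K_u=\int_{O(u)}TK\,dm(T),$$
and since the class of zonoids is closed under Minkowski addition, orthogonal images, and Hausdorff limits, $K_u$ inherits the zonoid property from $K$.

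For the converse, the first observation is that $C$ commutes with every orthogonal change of variable and hence with the averaging operator $S_u$. Letting $S_u$ act on distributions by the duality $\langle f,S_u\mu\rangle=\langle S_uf,\mu\rangle$, one obtains $C(S_u\mu_K)=S_uC\mu_K=S_uh_K=h_{K_u}$, and uniqueness of generating distributions gives $\mu_{K_u}=S_u\mu_K$. Thus the hypothesis that every spin is a zonoid translates to $S_u\mu_K\geq 0$ for every $u\in\sphere{n-1}$.

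The main obstacle is then to conclude from this family of positivities that $\mu_K\geq 0$. I would test $\mu_K$ against an arbitrary continuous nonnegative even function $f$ by approximation: let $\phi_\epsilon\geq 0$ be an even bump on $[-1,1]$ peaked at $\pm 1$, and set
$$f_\epsilon(x)=\int_{\sphere{n-1}}f(u)\,\phi_\epsilon(\innerproduct{x}{u})\,d\sigma(u),$$
so that a standard approximation-to-the-identity argument yields $f_\epsilon\to cf$ uniformly for some constant $c>0$. For each fixed $u$ the kernel $x\mapsto\phi_\epsilon(\innerproduct{x}{u})$ is nonnegative and $O(u)$-invariant, so its $\mu_K$-integral equals its $(S_u\mu_K)$-integral and is therefore nonnegative by hypothesis. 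Integrating in $u$ against the nonnegative weight $f$ gives $\int f_\epsilon\,d\mu_K\geq 0$, and letting $\epsilon\to 0$ yields $\int f\,d\mu_K\geq 0$. Since $f\geq 0$ was arbitrary, $\mu_K$ is a positive measure and $K$ is a zonoid.
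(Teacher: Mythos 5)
Your overall strategy matches the paper's: reduce via the cosine transform and the identity $S_u \circ \mathcal{C} = \mathcal{C} \circ S_u$ to showing that a distribution $\rho$ with $S_u\rho \geq 0$ for all $u$ must itself be nonnegative, and prove that reduction by approximating test functions by superpositions of nonnegative zonal kernels. Your forward direction via the Minkowski integral $K_u = \int_{O(u)} TK\,dm(T)$ and the closure properties of zonoids is a clean alternative to the paper's observation that $S_u\mu$ is again a positive measure; both are correct. For the converse you replace the paper's Poisson kernel $\|x - ry\|^{-n}$ by an ad hoc zonal bump $\phi_\epsilon(\innerproduct{x}{u})$, and you integrate directly instead of first discretizing $\sigma_{n-1}$ by convex combinations of Diracs as the paper does; that is a legitimate and arguably tidier variant of the same idea.

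There is, however, one genuine gap in the converse. You propose to test $\mu_K$ against an arbitrary \emph{continuous} nonnegative even $f$ and invoke only \emph{uniform} convergence $f_\epsilon \to cf$. But $\mu_K$ is a priori just a distribution, not yet known to be a measure; it only pairs with $C^\infty$ test functions, and continuity of that pairing is with respect to the $C^\infty(\sphere{n-1})$ topology, not the sup norm. Testing against continuous functions with merely uniform convergence silently presupposes the very conclusion you want (that $\mu_K$ is a finite measure). The fix is to take $f \in C^\infty_e(\sphere{n-1})$, $f\geq 0$, choose $\phi_\epsilon$ smooth, and observe that zonal convolution commutes with the Laplace--Beltrami operator so that $f_\epsilon \to cf$ in the full $C^\infty$ topology, after which $\innerproduct{\mu_K}{f_\epsilon}\to c\,\innerproduct{\mu_K}{f}$. (You also need, but do not mention, a Fubini-type exchange to pass from the pointwise positivity $\innerproduct{\mu_K}{\phi_\epsilon(\innerproduct{\cdot}{u})}\geq 0$ to $\innerproduct{\mu_K}{f_\epsilon}\geq 0$; this is routine for a smooth kernel and a distribution but should be stated.) Once $\mu_K$ is shown to be a positive distribution, the standard fact that positive distributions are measures finishes the proof, exactly as in the paper.
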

\section{Preliminaries}
$C^{\infty}(\sphere{n-1})$ is the Frechet space of functions on $\sphere{n-1}$ that have derivatives of every order. Elements of this space are called \emph{test functions}. Its dual space, denoted
${\mathcal D}(\sphere{n-1})$, is the space of distributions on $\sphere{n-1}$. If a subscript $e$ is added to any of the above spaces, it is to designate the
subspace of even objects.

The \emph{cosine transform} is the operator ${\mathcal C}:C_e^{\infty}(\sphere{n-1})\to C_e^{\infty}(\sphere{n-1})$ defined by:
$$({\mathcal C}f)(x)=\int_{\sphere{n-1}}|\innerproduct{x}{y}|f(y)\,d\sigma_{n-1}(y),$$
where $\sigma_{n-1}$ is the normalized rotation-invariant measure on the sphere. 
It is well known that ${\mathcal C}$ is a continuous bijection of $C_e^{\infty}(\sphere{n-1})$ onto itself, and that it can be extended by duality
to a bi-continuous bijection of the dual space ${\mathcal D}_e(\sphere{n-1})$. Hence,
if $\rho$ is a distribution and ${f\in C_e^{\infty}(\sphere{n-1})}$ is a test function, then 
$$\innerproduct{{\mathcal C}\rho}{f}=\innerproduct{\rho}{{\mathcal C}f}$$
Since $C_e^{\infty}(\sphere{n-1})$ and its dual space, the even measures, are both naturally embedded in ${\mathcal D}_e(\sphere{n-1})$, it makes sense to speak about the cosine transform of a measure,
or of a continuous function. A fundamental connection between distributions and centrally symmetric convex bodies was discovered by Weil, in \cite{weil76}. Weil proved that for every centrally symmetric convex
body $K\subset\mathbb{R}^n$ there corresponds a unique distribution $\rho_K$ (called the \emph{generating distribution} of $K$) such that ${\mathcal C}\rho_K=h_K$, where $h_K$ is the support function of $K$.
Suppose $f\in C^{\infty}(\sphere{n-1})$. Then for every direction $u\in\sphere{n-1}$, the function $S_uf$ also belongs to $C^{\infty}(\sphere{n-1})$. Therefore, $S_u$ can be defined to act on distributions by duality:
$$\innerproduct{S_u\rho}{f}=\innerproduct{\rho}{S_uf},\quad \rho\in{\mathcal D}(\sphere{n-1}),\ \ f \in C^{\infty}(\sphere{n-1})$$
A routine verification shows that the transforms $S_u$ and ${\mathcal C}$ commute on test functions, and therefore also as transforms of distributions.
\section{Proof of Theorem 1.1}
If $K$ is a zonoid,  then $h_K={\mathcal C}\mu$ for some positive measure $\mu$, and for every $u\in\sphere{n-1}$,
$$S_uh_K=S_u({\mathcal C}(\mu))={\mathcal C}(S_u\mu)$$
Since $S_u\mu$ is a positive measure for every $\mu$, every spin of $K$ is a zonoid. This proves the easy part of the theorem.

Suppose $K$ is a centrally symmetric convex body every spin of which is a zonoid. That is, for each direction $u\in\sphere{n-1}$ there exists a positive measure $\mu_u$ such that $S_uh_K={\mathcal C}\mu_u$.
There exists a distribution $\rho$ such that $h_K={\mathcal C}\rho$, and so the assumption reduces to ${\mathcal C}(S_u\rho)={\mathcal C}\mu_u$, where the commuting of $S_u$ and ${\mathcal C}$
was used. Since ${\mathcal C}$ is one-to-one, the distribution $\rho$ is seen to satisfy $S_u\rho=\mu_u$ for every $u\in \sphere{n-1}$. It therefore remains to prove:
\begin{lem} A distribution is positive if and only if $S_u\rho$ is positive for every $u\in\sphere{n-1}$.
\end{lem}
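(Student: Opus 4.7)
The easy direction is immediate: if $\rho$ is a positive measure and $f\in C^\infty(\sphere{n-1})$ satisfies $f\ge 0$, then $S_uf$ is also nonnegative (being an average of $f$), so $\innerproduct{S_u\rho}{f}=\innerproduct{\rho}{S_uf}\ge 0$, which shows $S_u\rho$ is positive. For the converse, I plan to verify that $\rho$ acts nonnegatively on every nonnegative test function $\psi\in C^\infty(\sphere{n-1})$; a positive distribution on $\sphere{n-1}$ is automatically a positive Radon measure by the Schwartz representation theorem, so this suffices.

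The idea is to approximate $\psi$ in the $C^\infty$ topology by positive combinations of nonnegative zonal functions, to which the hypothesis applies directly. Fix a smooth nonnegative bump $\phi_\epsilon:[-1,1]\to[0,\infty)$ supported in $(1-\epsilon,1]$ and normalized so that $\int_{\sphere{n-1}}\phi_\epsilon(\innerproduct{u}{e})\,d\sigma_{n-1}(u)=1$ for any $e\in\sphere{n-1}$, and form the spherical convolution
$$\psi_\epsilon(x)=\int_{\sphere{n-1}}\psi(u)\,\phi_\epsilon(\innerproduct{u}{x})\,d\sigma_{n-1}(u).$$
Standard mollification arguments on the sphere give $\psi_\epsilon\to\psi$ in the Frechet topology of $C^\infty(\sphere{n-1})$. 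The crucial point is that for each fixed $u$ the kernel $g_u(x):=\phi_\epsilon(\innerproduct{u}{x})$ is zonal with respect to $u$, hence $O(u)$-invariant, so $S_ug_u=g_u$. Pulling $\rho$ inside the integral and using the duality definition of $S_u\rho$,
$$\innerproduct{\rho}{\psi_\epsilon}=\int_{\sphere{n-1}}\psi(u)\,\innerproduct{\rho}{S_ug_u}\,d\sigma_{n-1}(u)=\int_{\sphere{n-1}}\psi(u)\,\innerproduct{S_u\rho}{g_u}\,d\sigma_{n-1}(u)\ge 0,$$
because $\psi(u)\ge 0$, $g_u\ge 0$, and each $S_u\rho$ is a positive measure by hypothesis. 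Continuity of $\rho$ on $C^\infty(\sphere{n-1})$ then yields $\innerproduct{\rho}{\psi}=\lim_{\epsilon\to 0}\innerproduct{\rho}{\psi_\epsilon}\ge 0$.

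The main obstacle is justifying the interchange of $\rho$ with the integral defining $\psi_\epsilon$, and the $C^\infty$-convergence of the mollification. For the interchange, I would view $\psi_\epsilon$ as a vector-valued integral of the continuous map $u\mapsto\psi(u)g_u$ from $\sphere{n-1}$ into $C^\infty(\sphere{n-1})$; continuity follows from joint smoothness of $(u,x)\mapsto\phi_\epsilon(\innerproduct{u}{x})$ together with compactness of $\sphere{n-1}$, and any continuous linear functional commutes with such an integral. The convergence $\psi_\epsilon\to\psi$ in every $C^k$-seminorm reduces, after differentiating under the integral sign, to the usual approximate-identity estimate for spherical convolutions with a zonal bump concentrating at the north pole.
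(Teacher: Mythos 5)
Your proof is correct and rests on the same conceptual core as the paper's: nonnegative zonal test functions with axis $u$ are fixed by $S_u$, so the hypothesis forces $\rho$ to act nonnegatively on them, and a general nonnegative test function is then approximated in $C^\infty$ by nonnegative superpositions of such zonal building blocks. Where you diverge is in the approximation machinery. The paper uses the Poisson integral $P_rf$ and then discretizes $\sigma_{n-1}$ by convex combinations of Dirac masses, so that each partial sum is a finite nonnegative combination of the zonal kernels $x\mapsto\norma{x-ry_i}^{-n}$; this yields a double limit (first the Dirac approximation, then $r\to1^-$). You instead build a purpose-made zonal mollifier $\phi_\epsilon(\innerproduct{u}{x})$ and treat the spherical convolution $\psi_\epsilon=\int\psi(u)\,g_u\,d\sigma_{n-1}(u)$ as a vector-valued integral in the Fr\'echet space $C^\infty(\sphere{n-1})$, pulling $\rho$ through it directly; this collapses the argument to a single limit $\epsilon\to0$. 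Your route is cleaner and avoids the weak-$*$ Dirac approximation altogether, at the modest cost of invoking Pettis-type integration of a continuous compactly-parametrized family in a complete locally convex space, which you correctly identify and sketch. The paper's version is more pedestrian, replacing that abstract step by an explicit sequence of finite sums converging in every $C^k$ seminorm. Both are fully rigorous; yours is shorter once the vector-valued integration is accepted as standard.
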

\begin{proof} The "only if" part is obvious. Suppose $\rho\in{\mathcal D}(\sphere{n-1})$ is a distribution such that $S_u\rho$ is positive for every $u\in\sphere{n-1}$. Then $\innerproduct{\rho}{f}\geq 0$ for every positive
zonal test function $f$. Therefore if $g=\sum_1^ma_if_i$, where $a_i\geq 0$ and $f_i$ are positive zonal test functions, then also $\innerproduct{\rho}{g}\geq 0$.

Choose a positive test function $f\in C^{\infty}(\sphere{n-1})$ and write its Poisson integral:
$$P_rf(x)=\int_{\sphere{n-1}}\frac{1-r^2}{\norma{x-ry}^n}f(y)\,d\sigma_{n-1}(y)$$
It is well know that $\lim_{r\to 1^-}P_rf=f$ in the topology of $C^{\infty}(\sphere{n-1})$. Therefore, if $\innerproduct{\rho}{P_rf}\geq 0$ for every $0<r<1$ and every positive test function $f$, then $\rho$ is positive, by continuity.
Fix a sequence of convex combinations of Dirac measures of the form 
\begin{equation}\label{approx}
\nu_N=\sum_{i\geq 1}\lambda_{iN}\delta_{y_i},\quad y_i\in\sphere{n-1}
\end{equation}
such that $\nu_N\to\sigma_{n-1}$ in the $w^*$ topology of measures. If $\varphi$ is a test function, then
\begin{equation}\label{eq2}
\int_{\sphere{n-1}}\frac{1-r^2}{\norma{x-ry}^n}\varphi(y)\,d\nu_N(y)\to \int_{\sphere{n-1}}\frac{1-r^2}{\norma{x-ry}^n}\varphi(y)\,d\sigma_{n-1}(y)=P_r\varphi(x)
\end{equation}
If $F(x,y)$ is a continuous function on $\sphere{n-1}\times\sphere{n-1}$ then for every test function $\varphi$, the integrals $\innerproduct{F(x,y)\varphi(y)}{\nu_N}$ converge to the integral  $\innerproduct{F(x,y)\varphi(y)}{\sigma_{n-1}}$
uniformly in $x$. Therefore, the sequence of functions of the variable $x$ in the l.h.s of (\ref{eq2}) converge to the r.h.s in the topology of $C^{\infty}(\sphere{n-1})$, because the function $\norma{x-ry}^{-n}$ for
fixed $y$ and $0<r<1$ is $C^{\infty}$ with respect to $x$. Moreover, the distribution $\rho$ is positive on every term of the l.h.s of (\ref{eq2}), because for each $y_i$ appearing in  (\ref{approx}) one has
$$\norma{x-ry_i}^{-n}=\int_{O(y_i)}\norma{Tx-ry_i}^{-n}\,dm(T),\quad \forall x$$
so 
$$\innerproduct{\rho}{\norma{x-ry_i}^{-n}}=\innerproduct{\rho}{S_{y_i}(\norma{x-ry_i}^{-n})}=\innerproduct{S_{y_i}\rho}{\norma{x-ry_i}^{-n}}\geq 0$$

Hence by continuity $\rho$ is also positive of the r.h.s of (\ref{eq2}) as well. It follows that $\rho$ is a positive distribution.
\end{proof}
It is well known that a positive distribution is in fact a positive measure, that is, it satisfies $|\rho(f)|\leq\rho(1)\norma{f}_{\infty}$ for every test function, where $1$ is the constant function $1$. Consequently, 
there is a unique extension of $\rho$ to a bounded linear functional on $C_e(\sphere{n-1})$, and so in this sense it represents a measure. This completes the proof of the theorem.


\begin{thebibliography}{references}
\bibitem{weil76}
Weil, W, Centrally symmetric convex bodies and distributions, Israel J. Math., 24, 352--367 (1976)
\end{thebibliography}
\end{document}